\newtheorem{theorem}{Theorem}
\newtheorem{lemma}{Lemma}
\title{A Note on Robust Subsets of Transversal Matroids}
\author{Naoyuki Kamiyama%
\thanks{This work was supported by JSPS KAKENHI Grant Number JP20K11680.}
}
\date{\small Institute of Mathematics for Industry, Kyushu University, Fukuoka, Japan.\\
{\ttfamily kamiyama@imi.kyushu-u.ac.jp}}
\begin{document}

\maketitle

\begin{abstract}
Robust subsets of matroids
were introduced by 
Huang and Sellier to 
propose approximate kernels for the 
matroid-constrained maximum vertex cover problem. 
In this paper, we prove that the bound for 
robust subsets of transversal matroids
given by Huang and Sellier can be improved. 
\end{abstract} 

\section{Introduction}

Robust subsets of matroids
were introduced by 
Huang and Sellier~\cite{HS22} to 
propose approximate kernels for the 
matroid-constrained maximum 
vertex cover problem 
(see Section~\ref{section:preliminaries} for the formal definition of 
a robust subset of a matroid).
By using this concept, 
Huang and Sellier~\cite{HS22} extended 
the approach proposed by Manurangsi~\cite{M19} for 
uniform matroids
to partition matroids, laminar matroids, and 
transversal matroids.

In this paper, we especially focus on robust subsets of 
transversal matroids.
We prove that the bound for 
robust subsets of transversal matroid given by 
Huang and Sellier~\cite{HS22} can be improved
(see Section~\ref{section:preliminaries} for the formal description 
of our result). 

\section{Preliminaries} 
\label{section:preliminaries} 

Throughout this paper, 
let $\mathbb{Z}_+$ denote the set of non-negative integers. 
For each positive integer $z$, 
we define $[z] := \{1,2,\dots,z\}$. 
In addition, we define $[0] := \emptyset$. 
For each finite set $X$ and each element $x$, 
we define $X + x := X \cup \{x\}$ and 
$X - x := X \setminus \{x\}$. 
Furthermore, for each finite set $X$, 
each function $\rho \colon X \to \mathbb{Z}_+$, and 
each subset $Y \subseteq X$, 
we define $\rho(Y) := \sum_{x \in Y}\rho(x)$. 

In this paper, we assume that 
every undirected graph is finite and simple. 
Thus, every edge of an undirected graph $G$
can be regraded as a set consisting of two distinct vertices of $G$. 
Let $G = (U,V;E)$ denote an undirected bipartite 
graph where the vertex set of $G$ is partitioned into 
$U$ and $V$, $E$ is the edge set of $G$, 
and each edge of $E$ consists of one vertex in $U$ 
and one vertex in $V$.
For every undirected bipartite 
graph $G = (U,V;E)$ and every edge $\{u,v\} \in E$, 
we assume that $u \in U$ and $v \in V$.
For each undirected bipartite 
graph $G = (U,V;E)$, 
we call
a subset $M \subseteq E$ 
a \emph{matching in $G$} if
$e \cap f = \emptyset$ 
for every pair of distinct edges $e,f \in M$.
For each undirected bipartite graph $G = (U,V;E)$
and each subset $F \subseteq E$, we define 
$\partial(F)$ as 
the set of vertices $v \in U \cup V$ 
such that there exists 
an edge $e \in F$ satisfying the condition 
that $v \in e$.

An ordered pair ${\bf M} = (U, \mathcal{I})$ of a finite 
set $U$ and a non-empty family $\mathcal{I}$ of 
subsets of $U$ is called 
a \emph{matroid} if, for every pair of subsets 
$I,J \subseteq U$, the following conditions are satisfied. 
\begin{description}
\setlength{\parskip}{0mm} 
\setlength{\itemsep}{1mm} 
\item[(I1)]
If $I \subseteq J$ and $J \in \mathcal{I}$, then 
$I \in \mathcal{I}$. 
\item[(I2)]
If $I,J \in \mathcal{I}$ and 
$|I| < |J|$, then there exists an element 
$u \in J \setminus I$ such that 
$I + u \in \mathcal{I}$. 
\end{description} 

Let ${\bf M} = (U,\mathcal{I})$ be a matroid.
Define ${\rm rk}({\bf M}) := \max_{I \in \mathcal{I}}|I|$.   
We call an element $B \in \mathcal{I}$ a \emph{base of ${\bf M}$} if 
$|B| = {\rm rk}({\bf M})$. 
For 
each positive integer $\ell$, we define 
the ordered pair ${\bf M}^{\ell} = (U, \mathcal{I}^{\ell})$ as follows. 
Define $\mathcal{I}^{\ell}$ as the family of subsets 
$I \subseteq U$ such that there exist pairwise disjoint subsets 
$I_1,I_2,\dots,I_{\ell} \subseteq I$ 
satisfying the 
following conditions. 
\begin{description}
\setlength{\parskip}{0mm} 
\setlength{\itemsep}{1mm} 
\item[(U1)]
$I_1 \cup I_2 \cup \cdots \cup I_{\ell} = I$. 
\item[(U2)]
For every integer $i \in [{\ell}]$, 
we have $I_i \in \mathcal{I}$. 
\end{description}
It is known that 
the ordered pair ${\bf M}^{\ell}$ is a matroid 
(see, e.g., \cite[Section~11.3]{O11}). 
For 
each function $\omega \colon U \to \mathbb{Z}_+$, 
we call a base of ${\bf M}$ an \emph{optimal base 
of {\bf M} with respect to $\omega$} if 
$\omega(B) \ge \omega(B^{\prime})$ holds
for every base $B^{\prime}$ of ${\bf M}$. 

In this paper, we especially focus on \emph{transversal matroids}. 
Let $G = (U,V;E)$ be an  
undirected bipartite 
graph. 
Then 
we define the matroid ${\bf M}_G = (U, \mathcal{I}_G)$ as follows. 
Define 
$\mathcal{I}_G$ as the family of subsets $I \subseteq U$ such that 
there exists a matching $M$ in $G$ satisfying the condition that  
$I = \partial(M) \cap U$. 
It is known that 
${\bf M}_G$ defined in this way is a matroid.
This kind of matroid is called a 
transversal matroid (see, e.g., \cite[Section~1.6]{O11}). 

For each matroid ${\bf M} = (U, \mathcal{I})$, 
each function $\omega \colon U \to \mathbb{Z}_+$, 
and 
each positive integer $k$, 
we call a subset $X \subseteq U$ 
a \emph{$k$-robust subset of ${\bf M}$ with respect to $\omega$} 
if, for every base $B$ of ${\bf M}$, 
there exist 
pairwise disjoint 
subsets $X_1,X_2,\dots,X_{|B\setminus X|} \subseteq X \setminus B$ and 
a bijection 
$\phi \colon B \setminus X \to [|B \setminus X|]$ 
satisfying the following conditions. 
\begin{description}
\setlength{\parskip}{0mm} 
\setlength{\itemsep}{1mm} 
\item[(R1)]
For every integer $i \in [|B \setminus X|]$, 
we have $|X_i| = k$. 
\item[(R2)]
For every element $u \in B \setminus X$ and every element 
$v \in X_{\phi(u)}$, we have 
$\omega(u) \le \omega(v)$. 
\item[(R3)] 
For every element $(u_1,u_2,\dots,u_{|B\setminus X|}) 
\in X_1 \times X_2 \times \dots \times X_{|B \setminus X|}$, 
we have 
\begin{equation*}
(B \cap X) \cup \{u_1,u_2,\dots,u_{|B \setminus X|}\} \in 
\mathcal{I}. 
\end{equation*}
\end{description}
Then for 
each matroid ${\bf M} = (U, \mathcal{I})$, 
each function $\omega \colon U \to \mathbb{Z}_+$, 
and 
each positive integer $k$,
we define $\tau({\bf M}, \omega, k)$ as the minimum positive integer 
$\ell$ such that 
every optimal base of ${\bf M}^{\ell}$ with respect to 
$\omega$ is a $k$-robust subset of ${\bf M}$ 
with respect to $\omega$. 
(See also Section~\ref{section:definition} for remarks on 
the definition of $\tau({\bf M}, \omega, k)$.)

Huang and Sellier~\cite{HS22} proved 
that, for every undirected bipartite graph $G = (U,V;E)$, 
every function $\omega \colon U \to \mathbb{Z}_+$, 
and every positive integer $k$, we have 
\begin{equation*}
\tau({\bf M}_G, \omega, k) \le k + {\rm rk}({\bf M}_G) - 1.
\end{equation*} 
In this paper, we prove the 
following theorem. 
We prove Theorem~\ref{main_theorem} in Section~\ref{section:proof}. 

\begin{theorem} \label{main_theorem} 
For every undirected bipartite graph $G = (U,V;E)$, 
every function $\omega \colon U \to \mathbb{Z}_+$, 
and every positive integer $k$, we have 
\begin{equation*}
\tau({\bf M}_G, \omega, k) \le k.
\end{equation*} 
\end{theorem}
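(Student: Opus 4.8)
The plan is to argue entirely inside the bipartite model. First I would absorb the $k$‑fold union into a single transversal matroid: put $G' := (U,\ V\times[k];\ E')$ with $E' := \{\{u,(v,j)\}: \{u,v\}\in E,\ j\in[k]\}$. Then ${\bf M}_G^k = {\bf M}_{G'}$: a set $I\subseteq U$ lies in $\mathcal I_G^k$ exactly when $I = \partial(M)\cap U$ for a matching $M$ of $G'$, the copy index $j$ recording into which part of the decomposition (U1), (U2) a vertex of $I$ is placed. Consequently an optimal base $X$ of ${\bf M}_G^k$ with respect to $\omega$ is realized by a maximum matching $M'$ of $G'$ with $\partial(M')\cap U = X$. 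Fix a base $B$ of ${\bf M}_G$. If $B\subseteq X$ then the robustness condition for $B$ holds vacuously, so assume $B\setminus X\neq\emptyset$ and fix a matching $N$ of $G$ with $\partial(N)\cap U = B$, writing $n_b$ for the $N$‑partner of $b\in B$; for each $j\in[k]$ set $\hat N^{(j)} := \{\{b,(n_b,j)\}: b\in B\}$, a matching of $G'$ covering $B$.

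The core construction produces, for each $u\in B\setminus X$, a set $X_{\phi(u)}\subseteq X\setminus B$ of size $k$, where $\phi\colon B\setminus X\to[|B\setminus X|]$ is a fixed bijection. Since $u\notin X$ it is $M'$‑exposed, and since $\{u,(n_u,j)\}\in E'$ while $M'$ is maximum, the vertex $(n_u,j)$ is $M'$‑covered for every $j$. Hence $u$ has degree one in $M'\triangle\hat N^{(j)}$, so it is the endpoint of a path component $P^{(j)}_u$, which alternates $\hat N^{(j)}$‑edges and $M'$‑edges. Tracing $P^{(j)}_u$ from $u$, and using maximality of $M'$ to exclude its ending at an $M'$‑exposed vertex of $V\times[k]$, I expect $P^{(j)}_u$ to end at a vertex $z^{(j)}_u$ of $U$ that is reached along its $M'$‑edge (so $z^{(j)}_u\in X$) and carries no $\hat N^{(j)}$‑edge (so $z^{(j)}_u\notin B$); thus $z^{(j)}_u\in X\setminus B$. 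I would then take $X_{\phi(u)} := \{z^{(j)}_u: j\in[k]\}$. Condition (R2) should follow from optimality: flipping all of $P^{(j)}_u$ turns $M'$ into a matching of $G'$ covering $X - z^{(j)}_u + u$, which is again a base of ${\bf M}_G^k$, so $\omega(X)\ge\omega(X - z^{(j)}_u + u)$ forces $\omega(z^{(j)}_u)\ge\omega(u)$.

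It remains to check (R1) together with pairwise disjointness of the $X_i$ --- i.e.\ that all the $z^{(j)}_u$, over $u\in B\setminus X$ and $j\in[k]$, are distinct --- and (R3). For a fixed $j$, distinct $u$'s lie in distinct components of $M'\triangle\hat N^{(j)}$, so their $z^{(j)}_u$ differ; the delicate case is $j\neq j'$, where I would argue that $P^{(j)}_u$ and $P^{(j')}_{u'}$ share no vertex of $V\times[k]$ (an interior such vertex of $P^{(j)}_u$ carries an $\hat N^{(j)}$‑edge, hence has copy index $j$, not $j'$), and therefore share no vertex at all, since a common vertex of $X$ would be reached along its unique $M'$‑edge in each path, placing its $M'$‑partner on both. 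For (R3), given a transversal $T = \{z^{(j_u)}_u: u\in B\setminus X\}$ of the $X_i$, I would project the paths $P^{(j_u)}_u$ down to $G$: each yields a path $\bar P_u$ in $G$ that alternates with respect to $N$, runs from $u$ to $z^{(j_u)}_u$, has all interior vertices $N$‑covered, and, by the disjointness above and injectivity of $b\mapsto n_b$, is vertex-disjoint from every other $\bar P_{u'}$. Flipping all the $\bar P_u$ converts $N$ into a matching of $G$ covering exactly $(B\cap X)\cup T$, a set with $|B\cap X| + |B\setminus X| = {\rm rk}({\bf M}_G)$ elements that is independent in ${\bf M}_G$ --- hence a base, which gives (R3).

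I expect the real obstacle to be the bookkeeping forced by the $k$ copies acting together: one needs the $k$ paths $P^{(1)}_u,\dots,P^{(k)}_u$ attached to a single $u$ to terminate at $k$ distinct vertices of $X\setminus B$, these to remain distinct as $u$ varies, and, at the same time, every transversal of the $X_i$ to be carried by paths that project to vertex-disjoint $N$‑alternating paths of $G$. The copy-index argument behind the distinctness and the projection-and-flip argument behind (R3) are where the content lies; the identification ${\bf M}_G^k = {\bf M}_{G'}$ and the weight inequality (R2) are comparatively routine. It is worth noting that one never needs to assume a priori that $X$ spans ${\bf M}_G$: the construction itself exhibits $k\cdot|B\setminus X|$ distinct elements of $X\setminus B$ and a base $(B\cap X)\cup T$ of ${\bf M}_G$ contained in $X$.
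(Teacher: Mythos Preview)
Your argument is correct, and it takes a route genuinely different from the paper's. Both proofs begin with the identification ${\bf M}_G^k={\bf M}_{G'}$ and both ultimately hinge on alternating-path reasoning in $G'$, but the constructions of the sets $X_{\phi(u)}$ diverge. The paper builds an auxiliary \emph{directed} graph $D$ on a subset $W$ of $V$ (not on $G'$): an arc $v\to v'$ records that some vertex of $X\cap B$ whose $M'$-edge lands in the fibre over $v$ has $N$-partner $v'$. For each $u\in B\setminus X$ it follows a single directed path in $D$ from the source $n_u$ to a sink $\mathsf{q}_u$, and sets $X_{\phi(u)}$ to be the entire $M'$-fibre over $\mathsf{q}_u$, namely the $k$ vertices of $X$ matched by $M'$ to $(\mathsf{q}_u,1),\dots,(\mathsf{q}_u,k)$. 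Disjointness of the $X_i$ then follows from the in-degree-$\le 1$ structure of $D$, and (R1) from the nonexistence of an augmenting path through the copies of $\mathsf{q}_u$. You instead run $k$ separate symmetric-difference arguments, one per copy index, and let $X_{\phi(u)}$ collect the $k$ endpoints $z_u^{(1)},\dots,z_u^{(k)}$; your disjointness comes from the copy-index bookkeeping together with uniqueness of $M'$-partners.

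What each approach buys: the paper's sets are structurally simpler (each is the fibre over a single $V$-vertex), and the directed-graph picture makes the ``no augmenting path / no positive alternating path'' checks uniform across all $k$ copies at once. Your approach dispenses with the auxiliary directed graph entirely and stays inside symmetric differences of matchings; in particular your (R2) argument (flip $P_u^{(j)}$ and compare bases of ${\bf M}_G^k$ directly) is shorter than the paper's, which first passes through a maximum-\emph{weight} matching of $G'$. For (R3) the two proofs essentially coincide once the paths are in hand: both project the alternating paths down to $G$ and flip $N$ along the resulting vertex-disjoint family. One small point worth making explicit in your write-up: the other endpoint of $P_u^{(j)}$ cannot be a second element of $B\setminus X$ either, since that would also produce an $M'$-augmenting path; with that case excluded, the endpoint is forced into $X\setminus B$ as you claim.
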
 

\section{Auxiliary Bipartite Graphs} 
\label{section:auxiliary} 

Throughout this section, 
let $G = (U,V;E)$ be an undirected bipartite graph. 
Furthermore, we are given 
a function $\omega \colon U \to \mathbb{Z}_+$
and a positive integer $k$. 

We define the undirected bipartite graph $G^k = (U,V^k;E^k)$
as follows. 
For each vertex $v \in V$ and 
each integer $t \in [k]$,
we prepare a new vertex $v(t)$. 
Then we define 
$V^k := 
\{v(t) \mid v \in V, t \in [k]\}$. 
For each vertex 
$u \in U$
and each vertex 
$v(t) \in V^k$,
$\{u,v(t)\} \in E^k$ if and 
only if $\{u,v\} \in E$. 
We define the function 
$\overline{\omega} \colon E^k \to \mathbb{Z}_+$ 
by 
$\overline{\omega}(\{u,v(t)\}) := \omega(u)$
for each edge $\{u,v(t)\} \in E^k$.
We call a matching $N$ in $G^k$ a 
\emph{maximum-weight matching in $G^k$ with respect to $\overline{\omega}$} if 
$\overline{\omega}(N) \ge \overline{\omega}(N^{\prime})$ for every 
matching $N^{\prime}$ in $G^k$. 

\begin{lemma} \label{lemma:independent}
Let $I$ be a subset of $U$.
Then $I \in \mathcal{I}_G^k$ if and only if 
there exists a matching $N$ in $G^k$ 
such that 
$I = \partial(N) \cap U$. 
\end{lemma}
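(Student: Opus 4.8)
The plan is to establish a concrete correspondence between matchings in $G^k$ and $k$-tuples $(M_1,\dots,M_k)$ of matchings in $G$ whose $U$-sides $\partial(M_t)\cap U$ are pairwise disjoint, and then to read off both directions of the equivalence directly from the descriptions of $\mathcal{I}_G$ and $\mathcal{I}_G^k$ in terms of matchings together with conditions (U1) and (U2).

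For the ``if'' direction, I would start from a matching $N$ in $G^k$ with $I = \partial(N)\cap U$ and, for each $t \in [k]$, set $N_t := \{\{u,v\} \in E \mid \{u,v(t)\} \in N\}$. First I would check that each $N_t$ is a matching in $G$: two distinct edges of $N_t$ correspond to two distinct edges of $N$ whose $V^k$-endpoints have the form $v(t)$ and $v'(t)$, and since $N$ is a matching these endpoints are distinct (so $v \neq v'$), as are their $U$-endpoints. Next I would check that the sets $\partial(N_t)\cap U$ are pairwise disjoint: if some $u \in U$ belonged to both $\partial(N_t)\cap U$ and $\partial(N_s)\cap U$ with $t \neq s$, then $N$ would contain two edges $\{u,v(t)\}$ and $\{u,v'(s)\}$ sharing the vertex $u$, contradicting that $N$ is a matching. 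Since plainly $\partial(N)\cap U = \bigcup_{t \in [k]} (\partial(N_t)\cap U)$, setting $I_t := \partial(N_t)\cap U \in \mathcal{I}_G$ yields pairwise disjoint subsets witnessing $I \in \mathcal{I}_G^k$ via (U1) and (U2).

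For the ``only if'' direction, I would start from $I \in \mathcal{I}_G^k$, so there are pairwise disjoint $I_1,\dots,I_k \subseteq I$ with $I_1 \cup \cdots \cup I_k = I$ and each $I_t \in \mathcal{I}_G$. For each $t$ pick a matching $M_t$ in $G$ with $I_t = \partial(M_t)\cap U$, and define $N := \bigcup_{t \in [k]} \{\{u,v(t)\} \mid \{u,v\} \in M_t\}$ (each such $\{u,v(t)\}$ is indeed in $E^k$ because $\{u,v\}\in E$). I would verify that $N$ is a matching in $G^k$ by two cases: two edges arising from the same $M_t$ cannot share a vertex since $M_t$ is a matching and they share the copy index $t$; two edges arising from $M_t$ and $M_s$ with $t\neq s$ cannot share a $V^k$-vertex because the copy indices differ, and cannot share a $U$-vertex because their $U$-endpoints lie in the disjoint sets $I_t$ and $I_s$. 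Then $\partial(N)\cap U = \bigcup_{t \in [k]} (\partial(M_t)\cap U) = \bigcup_{t \in [k]} I_t = I$, completing the proof.

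I do not expect a real obstacle here; the argument is essentially careful bookkeeping. The one point deserving attention -- and the conceptual reason $G^k$ is defined as it is -- is that the pairwise-disjointness demanded by (U1) corresponds exactly to forbidding the merged matching in $G^k$ from reusing a vertex of $U$, while the $k$ parallel copies $v(1),\dots,v(k)$ of each $v \in V$ are precisely what is needed so that the matchings $M_1,\dots,M_k$ can be combined into a single matching of $G^k$ without colliding on the $V$-side.
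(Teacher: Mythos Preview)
Your proposal is correct and follows essentially the same approach as the paper: decompose a matching $N$ in $G^k$ into its $k$ layers $M_t=\{\{u,v\}\mid \{u,v(t)\}\in N\}$ to get the partition $I_t=\partial(M_t)\cap U$, and conversely lift matchings $M_t$ witnessing $I_t\in\mathcal{I}_G$ to the $t$-th copy of $V$ and take their union. If anything, your write-up is more careful than the paper's, which asserts that the union $N$ is a matching and that the $I_t$ cover $I$ without spelling out the disjointness and no-collision checks you make explicit.
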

\begin{proof}
Assume that $I \in \mathcal{I}_G^k$. 
Then there exist pairwise disjoint 
subsets $I_1,I_2, \dots,I_k \subseteq I$ 
satisfying (U1) and (U2). 
For every integer $t \in [k]$, 
since $I_t \in \mathcal{I}_G$, 
there exists a matching $M_t$ in $G$ such that 
$I_t = \partial(M_t) \cap U$. 
For each integer $t \in [k]$, we define 
$N_t := \{\{u,v(t)\} \mid \{u,v\} \in M_t\}$.
In addition, we define 
$N := N_1 \cup N_2 \cup \cdots \cup N_k$. 
Then $N$ is a matching in $G^k$ and 
$I = \partial(N) \cap U$. 

Next, we assume that 
there exists a matching $N$ in $G^k$ 
such that 
$I = \partial(N) \cap U$.
For each integer $t \in [k]$, 
we define $M_t := \{\{u,v\} \mid \{u,v(t)\} \in N\}$
and $I_t := \partial(M_t)$. 
Then for every integer $t \in [k]$, 
since $M_t$ is a matching in $G$, $I_t \in \mathcal{I}_G$.
Furthermore, 
$I_1 \cup I_2 \cup \dots \cup I_k = I$. 
\end{proof} 

\begin{lemma} \label{lemma:optimal}
For every optimal base $X$ of ${\bf M}_G^k$ with respect to $\omega$,
there exists a maximum-weight matching $N$ in $G^k$ with respect to $\overline{\omega}$ 
such that 
$X = \partial(N) \cap U$. 
\end{lemma}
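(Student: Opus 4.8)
The plan is to connect optimal bases of ${\bf M}_G^k$ with maximum-weight matchings in the auxiliary graph $G^k$ via the weight function $\overline{\omega}$. First I would observe that, by Lemma~\ref{lemma:independent}, a subset $I \subseteq U$ lies in $\mathcal{I}_G^k$ precisely when $I = \partial(N) \cap U$ for some matching $N$ in $G^k$; moreover, by the construction of $G^k$, for any such $N$ we may assume $N$ is $U$-saturating on $\partial(N)\cap U$ and that $|N| = |\partial(N)\cap U|$, since each edge of $N$ contributes exactly one vertex of $U$. This means $\overline{\omega}(N) = \omega(\partial(N)\cap U)$ whenever $N$ is an inclusion-wise minimal matching covering $\partial(N)\cap U$; in general we can always prune a matching $N$ to a sub-matching $N'$ with $\partial(N')\cap U = \partial(N)\cap U$ and $\overline{\omega}(N') = \omega(\partial(N')\cap U) \le \overline{\omega}(N)$.

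Next I would argue the two inequalities relating the optima. Given a maximum-weight matching $N$ in $G^k$, after pruning as above we get a matching $N'$ with the same weight (since all edges at a vertex $u$ carry weight $\omega(u)$, pruning never decreases weight here—actually equality on the $U$-side forces $\overline{\omega}(N')=\overline{\omega}(N)$ when $N$ is already optimal), and $I := \partial(N')\cap U \in \mathcal{I}_G^k$ with $\omega(I) = \overline{\omega}(N)$. Conversely, any $I \in \mathcal{I}_G^k$ yields via Lemma~\ref{lemma:independent} a matching $N$ with $\partial(N)\cap U = I$, which after pruning has $\overline{\omega}(N') = \omega(I)$. Hence $\max_{I \in \mathcal{I}_G^k} \omega(I) = \max_{N} \overline{\omega}(N)$, the maximum over matchings in $G^k$. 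I would also need that any optimal base $X$ of ${\bf M}_G^k$ is in particular a maximum-$\omega$-weight \emph{independent set} of ${\bf M}_G^k$: since $\omega$ is non-negative and ${\bf M}_G^k$ has the property that any independent set extends to a base, an optimal base attains $\max_{I\in\mathcal{I}_G^k}\omega(I)$.

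Combining these, for an optimal base $X$ of ${\bf M}_G^k$ with respect to $\omega$, pick the matching $N$ in $G^k$ with $X = \partial(N)\cap U$ guaranteed by Lemma~\ref{lemma:independent}, pruned so that $\overline{\omega}(N) = \omega(X)$. Then $\overline{\omega}(N) = \omega(X) = \max_{I\in\mathcal{I}_G^k}\omega(I) = \max_{N'}\overline{\omega}(N')$, so $N$ is a maximum-weight matching in $G^k$ with respect to $\overline{\omega}$, and $X = \partial(N)\cap U$, as required.

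The main obstacle I anticipate is the pruning/bookkeeping step: one must be careful that the matching extracted from Lemma~\ref{lemma:independent} can indeed be taken so that $|N| = |X|$ and $\overline{\omega}(N) = \omega(X)$ rather than merely $\overline{\omega}(N)\ge\omega(X)$ with a possibly larger, non-saturated matching carrying extra weight on $U$-vertices outside $X$. Since every edge incident to $u\in U$ has weight exactly $\omega(u)$, any matching edge using a vertex $u\notin X$ would put $u$ into $\partial(N)\cap U$, contradicting $X = \partial(N)\cap U$; so in fact $N$ automatically uses only vertices of $X$ on the $U$-side, and discarding duplicate edges at a vertex of $X$ (keeping one) strictly decreases neither the covered $U$-set nor—because weights depend only on the $U$-endpoint—changes the per-vertex contribution, giving $\overline{\omega}(N) = \sum_{u\in X}\omega(u) = \omega(X)$ for the pruned matching. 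Verifying this identity carefully, together with the equality $\max_{I\in\mathcal{I}_G^k}\omega(I) = \max_{N}\overline{\omega}(N)$, is the crux; the rest is immediate.
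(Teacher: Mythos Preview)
Your proof is correct and rests on the same identity the paper uses, namely $\overline{\omega}(N)=\omega(\partial(N)\cap U)$ for every matching $N$ in $G^k$; the paper's version simply takes $N$ from Lemma~\ref{lemma:independent}, assumes it is not maximum-weight, and derives a contradiction from a heavier matching $N'$ via that identity together with (I2). Your ``pruning'' discussion, however, is superfluous and slightly confused: a matching by definition has pairwise disjoint edges, so no two edges of $N$ can share a $U$-endpoint, and hence $|N|=|\partial(N)\cap U|$ and $\overline{\omega}(N)=\omega(\partial(N)\cap U)$ hold automatically for \emph{any} matching $N$, with nothing to trim.
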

\begin{proof}
Let $X$ be an optimal base of ${\bf M}_G^k$ with respect to $\omega$. 
Then it follows from Lemma~\ref{lemma:independent} that 
there exists a matching $N$ in $G^k$ such that 
$X = \partial(N) \cap U$. 
Assume that $N$ is not a maximum-weight matching in $G^k$
with respect to $\overline{\omega}$. 
In this case, there exists a matching $N^{\prime}$ in $G^k$ 
such 
that $\overline{\omega}(N) < \overline{\omega}(N^{\prime})$. 
Define $Y := \partial(N^{\prime}) \cap U$. 
Lemma~\ref{lemma:independent} implies that 
$Y \in \mathcal{I}_G^k$.
Furthermore, the definition of $\overline{\omega}$ 
implies that 
$\omega(X) = \overline{\omega}(N) < \overline{\omega}(N^{\prime}) = \omega(Y)$. 
Since (I2) implies that 
there exists a base $X^{\prime}$ 
of ${\bf M}_G^k$ such that 
$\omega(Y) \le \omega(X^{\prime})$. 
This contradicts the fact that 
$X$ is an optimal base of ${\bf M}_G^k$ with respect to $\omega$. 
This completes the proof. 
\end{proof} 

In what follows, let $N$ be a matching in $G^k$.

Let $P = 
(x_1,y_1,x_2,y_2,\dots,y_{\ell},x_{\ell+1})$
be a sequence 
of distinct vertices of $G^k$. 
Then $P$ is called an 
\emph{alternating path in $G^k$ 
with respect to $N$} if 
the following conditions are satisfied. 
\begin{itemize}
\setlength{\parskip}{0mm} 
\setlength{\itemsep}{1mm} 
\item
For every integer $i \in [\ell+1]$ (resp.\ $i \in [\ell]$), 
$x_i \in U$ (resp.\ and $y_i \in V^k$). 
\item
For every integer $i \in [\ell]$ 
$\{x_i,y_i\} \in E^k \setminus N$ and $\{x_{i+1},y_i\} \in N$. 
\item
$x_1 \notin \partial(N)$. 
\end{itemize}
We also say that 
$P$ is an alternating path in $G^k$ 
with respect to $N$ form $x_1$ to $x_{\ell+1}$.
Notice that
$\overline{\omega}(\{x_{i+1},y_i\}) = \overline{\omega}(\{x_{i+1},y_{i+1}\})$
for every integer $i \in [\ell -1]$.
Assume that 
$P$ is an alternating path in $G^k$ 
with respect to $N$.
Then we define the new matching $N \oplus P$ in $G^k$ by 
\begin{equation*}
N \oplus P := 
(N \setminus \big\{\{x_2,y_1\},\{x_3,y_2\},\dots,\{x_{\ell+1},y_{\ell}\}\big\}) 
\cup 
\big\{\{x_1,y_1\},\{x_2,y_2\},\dots,\{x_{\ell},y_{\ell}\}\big\}. 
\end{equation*}
Furthermore, we define $\overline{\omega}(P)$ by   
\begin{equation*}
\begin{split}
\overline{\omega}(P) & := 
\sum_{i \in [\ell]} \overline{\omega}(\{x_i,y_i\}) 
- 
\sum_{i \in [\ell]} \overline{\omega}(\{x_{i+1},y_i\})
= \omega(x_1) - \omega(x_{\ell+1}). 
\end{split} 
\end{equation*}
Notice that 
$\overline{\omega}(N\oplus P) = \overline{\omega}(N) + \overline{\omega}(P)$. 
Thus, if $N$ is a maximum-weight matching in $G^k$ with respect to $\omega$, 
then 
there does not exist an alternating path $P$ in $G^k$ with respect to $N$ 
such that $\overline{\omega}(P) > 0$. 

Let $P = 
(x_1,y_1,x_2,y_2,\dots,x_{\ell},y_{\ell})$
be a sequence 
of distinct vertices of $G^k$. 
Then $P$ is called an 
\emph{augmenting path in $G^k$ 
with respect to $N$} if 
the following conditions are satisfied. 
\begin{itemize}
\setlength{\parskip}{0mm} 
\setlength{\itemsep}{1mm} 
\item
$(x_1,y_1,x_2,y_2,\dots,y_{\ell-1},x_{\ell})$ 
is an alternating path in $G^k$ 
with respect to $N$. 
\item
$y_{\ell} \in V^k$ and 
$y_{\ell} \notin \partial(N)$.
\end{itemize}
We also say that 
$P$ is an augmenting path in $G^k$ 
with respect to $N$ form $x_1$ to $y_{\ell}$.
If $\partial(N) \cap U$ is a base of ${\bf M}_G^k$, then 
there does not exist an augmenting path in $G^k$ with respect to $N$. 

\section{Proof of Theorem~\ref{main_theorem}} 
\label{section:proof} 

In this section, we prove Theorem~\ref{main_theorem}. 
Throughout this section, 
let $G = (U,V;E)$ be an undirected bipartite graph. 
Furthermore, we are given 
a function $\omega \colon U \to \mathbb{Z}_+$
and a positive integer $k$. 
We define the function $\overline{\omega} \colon E^k \to \mathbb{Z}_+$
as in Section~\ref{section:auxiliary}.

In what follows, let $X$ be an optimal base of ${\bf M}_G^k$ with respect to 
$\omega$. 
Furthermore, 
let $B$ be a base of ${\bf M}_G$. 
Lemma~\ref{lemma:optimal} implies that 
there exists a maximum-weight matching $N$ in $G^k$ with respect to 
$\overline{\omega}$ such that 
$X = \partial(N) \cap U$. 
In addition, since $B \in \mathcal{I}$, 
there exists a matching $M$ in $G$ such that 
$B = \partial(M) \cap U$. 
For each vertex $v \in V$, 
we define $X_v$ as the set of 
vertices $u \in X$
such that 
there exists an integer $t \in [k]$ 
satisfying the condition that 
$\{u,v(t)\} \in N$. 
Define the function $\varphi \colon B \to V$ by 
defining $\varphi(u)$ as the vertex $v \in V$ 
such that 
$\{u,v\} \in M$. 

Define $W$ as the set of vertices $v \in V$ such that 
$\varphi(u) \neq v$ holds for every vertex $u \in X_v \cap B$. 
We define the simple directed graph $D = (W,A)$ as follows. 
For each pair of distinct vertices $v,v^{\prime} \in W$, 
the arc $(v,v^{\prime})$ from $v$ to $v^{\prime}$ exists in $A$ if and only if 
there exists a vertex $u \in X_v \cap B$ such that 
$\varphi(u) = v^{\prime}$. 
For each arc $(v,v^{\prime}) \in A$, 
we say that 
\emph{$(v,v^{\prime})$ enters $v^{\prime}$}
(resp.\ \emph{leaves $v$}). 
For each vertex $v \in W$, 
$v$ is called a \emph{source} 
(resp.\ \emph{sink}) \emph{of $D$} if 
there does not exist an arc in $A$
that enters (resp.\ \emph{leaves}) $v$. 

\begin{lemma} \label{lemma:in_degree}
For every vertex $v \in W$, 
the number of arcs in $A$ that enter $v$ is 
at most one.
\end{lemma}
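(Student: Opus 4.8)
The plan is to argue directly from the construction of the digraph $D$, using nothing beyond the fact that $M$ is a matching in $G$ and $N$ is a matching in $G^k$. Fix a vertex $v \in W$ and suppose, toward a contradiction, that there are two distinct arcs of $A$ entering $v$, say $(v_1,v)$ and $(v_2,v)$ with $v_1 \neq v_2$. First I would unwind the definition of $A$: the arc $(v_1,v)$ supplies a vertex $u_1 \in X_{v_1} \cap B$ with $\varphi(u_1) = v$, and the arc $(v_2,v)$ supplies a vertex $u_2 \in X_{v_2} \cap B$ with $\varphi(u_2) = v$.

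Next I would invoke the matching $M$. By the definition of $\varphi$, the equalities $\varphi(u_1) = v$ and $\varphi(u_2) = v$ mean $\{u_1,v\} \in M$ and $\{u_2,v\} \in M$; since $M$ is a matching and both edges contain $v$, this forces $u_1 = u_2$. Write $u$ for this common vertex. Then I would invoke the matching $N$: from $u \in X_{v_1}$ there is an integer $t_1 \in [k]$ with $\{u,v_1(t_1)\} \in N$, and from $u \in X_{v_2}$ there is an integer $t_2 \in [k]$ with $\{u,v_2(t_2)\} \in N$; since $N$ is a matching and both edges contain $u$, this forces $v_1(t_1) = v_2(t_2)$, hence $v_1 = v_2$, contradicting $v_1 \neq v_2$. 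Therefore at most one arc of $A$ enters $v$.

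I do not expect a genuine obstacle here: the statement is essentially an unwinding of the definition of the auxiliary digraph $D$. The only thing to keep track of is that two distinct matchings are in play — $\varphi$ encodes $M$ while the sets $X_v$ encode $N$ — so the step that deserves a moment's care is simply applying the matching property of $M$ and of $N$ each in the correct place. No calculation is involved.
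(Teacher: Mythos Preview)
Your proof is correct and follows essentially the same approach as the paper: both arguments unwind the definition of $A$ and use that $M$ and $N$ are matchings to derive a contradiction from two distinct incoming arcs. The only cosmetic difference is the order in which the two matching conditions are applied: the paper (implicitly via the disjointness of the sets $X_p$ and $X_q$) first invokes $N$ to conclude the two witnesses $u_p,u_q$ are distinct and then contradicts $M$, whereas you first invoke $M$ to force $u_1=u_2$ and then contradict $N$; in this respect your write-up is slightly more explicit.
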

\begin{proof}
Assume that there exists a vertex $v \in W$ 
such that 
distinct arcs $(p,v), (q,v) \in A$ enter $v$. 
Then there exist distinct vertices 
$u_p \in X_p \cap B$ and 
$u_q \in X_q \cap B$ such that 
$\{u_p,v\}, \{u_q,v\} \in M$. 
However, this contradicts the fact that 
$M$ is a matching in $G$. 
\end{proof}

\begin{lemma} \label{lemma:source} 
For every vertex $u \in B \setminus X$, 
$\varphi(u) \in W$ and $\varphi(u)$ is a source of $D$.
\end{lemma}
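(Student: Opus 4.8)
The plan is to show that if some $u \in B \setminus X$ had $\varphi(u) \notin W$, or $\varphi(u) \in W$ but with an incoming arc, then we could build an alternating path in $G^k$ with respect to $N$ of positive weight, contradicting the fact that $N$ is a maximum-weight matching. First I would observe that since $u \in B \setminus X$, we have $u \notin \partial(N)$, so $u$ is a legitimate starting vertex for an alternating path. Set $v := \varphi(u)$; because $\{u,v\} \in M$ and $M$ is a matching, $v$ is well-defined, and $\{u, v(t)\} \in E^k$ for every $t \in [k]$ by the definition of $G^k$.

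The core of the argument is a maximality/weight consideration at $v$. Consider the set $X_v$: these are the vertices of $X$ matched by $N$ to some copy $v(t)$. I would first argue $|X_v| = k$: if fewer than $k$ copies $v(t)$ were used by $N$, then since $u \notin \partial(N)$ and $\{u,v(t)\} \in E^k$ for all $t$, the single edge $\{u,v(t)\}$ for an unused $t$ would be an augmenting path, contradicting that $X = \partial(N)\cap U$ is a base of ${\bf M}_G^k$ (hence admits no augmenting path, as noted at the end of Section~\ref{section:auxiliary}). So all $k$ copies of $v$ are saturated by $N$. Next, for every $w \in X_v$ I claim $\omega(w) \ge \omega(u)$: otherwise, letting $t$ be the index with $\{w, v(t)\} \in N$, the length-two alternating path $(u, v(t), w)$ — wait, that is not quite an alternating path in the paper's orientation; rather consider $P = (u, v(t), w)$ which in the notation means $x_1 = u$, $y_1 = v(t)$, $x_2 = w$, with $\{u,v(t)\} \in E^k \setminus N$ and $\{w,v(t)\} \in N$ and $u \notin \partial(N)$. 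This is an alternating path with $\overline{\omega}(P) = \omega(u) - \omega(w) > 0$, contradicting maximum-weightness of $N$. Hence $\omega(w) \ge \omega(u)$ for all $w \in X_v$.

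Now I would use these two facts to pin down $v \in W$ and that $v$ is a source. For $v \in W$: I need $\varphi(w) \ne v$ for every $w \in X_v \cap B$. Suppose $w \in X_v \cap B$ with $\varphi(w) = v$. Then $\{w, v\} \in M$ and $\{u, v\} \in M$ with $w \ne u$ (since $w \in X$ but $u \notin X$), contradicting that $M$ is a matching. Hence $\varphi(w) \ne v$ for all $w \in X_v \cap B$, so $v \in W$. For $v$ being a source: suppose an arc $(p, v) \in A$ enters $v$. By definition of $D$ there is $w \in X_p \cap B$ with $\varphi(w) = v$, i.e.\ $\{w, v\} \in M$; but again $\{u,v\} \in M$ and $u \ne w$, contradicting that $M$ is a matching. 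So no arc enters $v$, i.e.\ $v = \varphi(u)$ is a source of $D$.

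The step I expect to be the main obstacle is establishing $|X_v| = k$ cleanly — in particular making sure the augmenting-path contradiction is set up correctly with the right saturation condition (that $X$, being an optimal base of ${\bf M}_G^k$, is in particular a base, so $|X| = {\rm rk}({\bf M}_G^k)$ and $N$ admits no augmenting path), and handling the bookkeeping between $v \in V$ and its copies $v(1), \dots, v(k) \in V^k$. The rest is a direct appeal to $M$ being a matching together with the no-positive-weight-alternating-path property of $N$.
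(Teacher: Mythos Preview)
Your final two paragraphs are correct and contain exactly the paper's proof: both $\varphi(u) \in W$ and the source property follow immediately from the observation that any $w \ne u$ with $\varphi(w) = \varphi(u)$ would give two distinct edges of $M$ through the vertex $\varphi(u)$, contradicting that $M$ is a matching in $G$.

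However, the first half of your proposal --- establishing $|X_v| = k$ via an augmenting-path argument and $\omega(w) \ge \omega(u)$ for all $w \in X_v$ via a positive-weight alternating path --- is entirely superfluous for this lemma. You announce that you will ``use these two facts to pin down $v \in W$ and that $v$ is a source,'' but your subsequent arguments never invoke either of them; only the matching property of $M$ is used. Your stated plan (deriving a contradiction with the maximum-weightness of $N$) is simply not the mechanism at work here: the obstruction is purely combinatorial, coming from $M$, and neither $N$ nor weights enter. Consequently the step you flagged as the ``main obstacle'' (namely $|X_v| = k$) is not an obstacle at all --- it is irrelevant to this lemma, though facts of that flavor do appear later in the paper when verifying conditions (R1) and (R2).
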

\begin{proof}
Let $u$ be a vertex in $B \setminus X$. 

Assume that $\varphi(u) \notin W$. 
Then there exists a vertex $w \in X_{\varphi(u)}$ such that 
$\varphi(w) = \varphi(u)$. 
Since $u \neq w$ and $\{u,\varphi(u)\} \in M$, this contradicts the fact that 
$M$ is a matching in $G$. 

Assume that there exists an arc $(v,\varphi(u)) \in A$ that enters $\varphi(u)$. 
In this case, there exists a vertex $w \in X_v \cap B$ such that 
$\{w,\varphi(u)\} \in M$.  
Since $u \neq w$ and 
$\{u,\varphi(u)\} \in M$, 
this contradicts the fact that $M$ is a matching in $G$. 
\end{proof} 

\begin{lemma} \label{lemma:w} 
For every vertex $v \in W$ and 
every vertex $u \in X_v \cap B$, 
we have $\varphi(u) \in W$. 
\end{lemma}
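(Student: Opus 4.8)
The plan is to prove this by contradiction, using only that $M$ is a matching in $G$ and that $N$ is a matching in $G^k$. Fix $v \in W$ and $u \in X_v \cap B$, and set $v^{\prime} := \varphi(u)$; we must show $v^{\prime} \in W$. The first step is an immediate observation: since $u \in X_v \cap B$, the definition of $W$ applied to $v$ gives $\varphi(u) \neq v$, that is, $v^{\prime} \neq v$. Keeping the vertices $v$ and $v^{\prime}$ distinct is what will eventually yield the contradiction.

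Next, suppose toward a contradiction that $v^{\prime} \notin W$. Then, by the definition of $W$, there is a vertex $w \in X_{v^{\prime}} \cap B$ with $\varphi(w) = v^{\prime}$. Since $\varphi(u) = v^{\prime} = \varphi(w)$, both $\{u, v^{\prime}\}$ and $\{w, v^{\prime}\}$ belong to $M$, so the fact that $M$ is a matching in $G$ forces $u = w$; in particular $u \in X_{v^{\prime}}$.

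Finally, I would combine $u \in X_v$ and $u \in X_{v^{\prime}}$. By the definitions of these sets, there are integers $t, t^{\prime} \in [k]$ with $\{u, v(t)\} \in N$ and $\{u, v^{\prime}(t^{\prime})\} \in N$; since $N$ is a matching in $G^k$, the vertex $u$ lies in at most one edge of $N$, so $v(t) = v^{\prime}(t^{\prime})$ and hence $v = v^{\prime}$. This contradicts $v^{\prime} \neq v$ from the first step, so $v^{\prime} \in W$, as required.

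I do not anticipate a genuine obstacle here: the statement is a bookkeeping consequence of the two matching conditions, echoing the arguments already used for Lemmas~\ref{lemma:in_degree} and~\ref{lemma:source}. The only point requiring care is to invoke the defining property of $W$ in the correct direction — once positively (for $v$, to obtain $v^{\prime} \neq v$) and once through its negation (for $v^{\prime}$, to produce the vertex $w$).
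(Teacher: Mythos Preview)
Your proof is correct and is essentially the same argument as the paper's. The only cosmetic difference is the order in which the two matching properties are used: the paper first appeals (implicitly) to $N$ being a matching to conclude $u \neq w$ from $u \in X_v$, $w \in X_{\varphi(u)}$, and $\varphi(u) \neq v$, and then contradicts $M$ being a matching, whereas you swap these two steps.
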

\begin{proof}
Assume there exist a vertex 
$v \in W$ and a vertex
$u \in X_v \cap B$
such that $\varphi(u) \notin W$. 
Then there 
exists a vertex $w \in X_{\varphi(u)}$ 
such that $\varphi(w) = \varphi(u)$. 
Since $u \neq w$ follows from the fact that 
$\varphi(u) \neq v$, this contradicts the fact that 
$M$ is a matching in $G$. 
\end{proof} 

\begin{lemma} \label{lemma:sink}
For every sink $v$ of $D$, 
we have $X_v \cap B = \emptyset$. 
\end{lemma}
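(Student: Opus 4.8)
The plan is to argue by contradiction: suppose $v$ is a sink of $D$ and yet $X_v \cap B \neq \emptyset$, and derive the existence of an arc leaving $v$. Fix a vertex $u \in X_v \cap B$. The first thing I would observe is that $\varphi(u) \neq v$: this is immediate from the definition of $W$, since $v \in W$ means precisely that $\varphi(u') \neq v$ for every $u' \in X_v \cap B$, and $u$ is such a vertex. Next, I would invoke Lemma~\ref{lemma:w} with this same $v$ and $u$ to conclude $\varphi(u) \in W$.

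Having established that $v$ and $\varphi(u)$ are two distinct vertices both lying in $W$, I would then check that the pair witnesses an arc of $D$. Indeed, by the definition of $A$, the arc $(v,\varphi(u))$ exists in $A$ if and only if there is some vertex $u' \in X_v \cap B$ with $\varphi(u') = \varphi(u)$; taking $u' = u$ does the job. Hence $(v,\varphi(u)) \in A$, and this arc leaves $v$, contradicting the assumption that $v$ is a sink of $D$. Therefore $X_v \cap B = \emptyset$, as claimed.

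I do not expect any real obstacle here: the statement is a direct bookkeeping consequence of the definition of $W$, the definition of the arc set $A$, and Lemma~\ref{lemma:w}. The only point that requires a moment of care is confirming that the target vertex $\varphi(u)$ is genuinely \emph{distinct} from $v$ (so that it qualifies as the head of an arc out of $v$ in the simple digraph $D$), and this distinctness is exactly the content of $v \in W$ applied to $u \in X_v \cap B$.
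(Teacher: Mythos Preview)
Your argument is correct and follows essentially the same route as the paper's proof: assume $u \in X_v \cap B$, use $v \in W$ to get $\varphi(u) \neq v$, invoke Lemma~\ref{lemma:w} to get $\varphi(u) \in W$, and conclude that $(v,\varphi(u)) \in A$ is an arc leaving $v$. The only difference is cosmetic ordering, and your version is slightly more explicit about verifying the arc condition.
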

\begin{proof}
Let $v$ be a sink of $D$. 
Assume that $X_v \cap B \neq \emptyset$. 
Let $u$ be a vertex in $X_v \cap B$. 
Lemma~\ref{lemma:w} implies that 
$\varphi(u) \in W$. 
Furthermore, since $v \in W$, 
$\varphi(u) \neq v$. 
This implies that 
the arc $(v,\varphi(u))$ leaves $v$. 
This contradicts the fact that $v$ is a sink of $D$. 
\end{proof} 

It is not difficult to see that 
Lemmas~\ref{lemma:in_degree} and \ref{lemma:source} imply that, for 
each vertex $u \in B \setminus X$, there exists a sequence 
$P_u = (z^u_1,z^u_2,\dots,z^u_{\ell(u)})$
of vertices in $V$ satisfying the following conditions. 
\begin{itemize}
\setlength{\parskip}{0mm} 
\setlength{\itemsep}{1mm} 
\item 
For every vertex $u \in B \setminus X$, 
$z^u_1 = \varphi(u)$.
\item
For every vertex $u \in B \setminus X$, 
$z^u_{\ell(u)}$ is a sink of $D$. 
\item 
For every vertex $u \in B \setminus X$, 
and every integer $i \in [\ell(u)-1]$, 
$(z^u_i,z^u_{i+1}) \in A$. 
\item
For every pair of distinct vertices $u,w \in B \setminus X$ and 
every pair of distinct integers $i \in [\ell(u)]$ and 
$j \in [\ell(w)]$, 
we have 
$z^{u}_i \neq z^{w}_i$. 
\end{itemize}
For each vertex $u \in B \setminus X$, 
we define ${\sf p}_u := z^u_{\ell(u)-1}$
and ${\sf q}_u := z^u_{\ell(u)}$. 

Let $\sigma$ be an arbitrary function 
from $[|B \setminus X|]$
to $\{{\sf q}_u\mid u \in B \setminus X\}$. 
Then for each integer $i \in [|B \setminus X|]$, 
we define $X_i := X_{\sigma(i)}$. 
In addition, we define the function $\phi \colon B \setminus X \to [|B \setminus X|]$
as follows. 
For each vertex $u \in B \setminus X$ and 
each integer $i \in [|B \setminus X|]$,
$\phi(u) = i$ if and only if 
$\sigma(i) = {\sf q}_u$. 
Since Lemma~\ref{lemma:sink} implies that
$X_v \cap B = \emptyset$ for every sink $v$ of $D$,  
$X_i \subseteq X \setminus B$ for every integer $i \in [|B\setminus X|]$. 
In the rest of this section, 
we prove that 
$X_1,X_2,\dots,X_{|B\setminus X|}$ and $\phi$ 
satisfy (R1), (R2), and (R3).
This completes the proof of Theorem~\ref{main_theorem}.

\begin{lemma} \label{lemma:alternating} 
Let $u$ be a vertex in $B \setminus X$.
Then for every vertex $w \in X_{{\sf p}_u} \cup X_{{\sf q}_u}$,
there exists an alternating path in $G^k$ with respect to $N$ 
from $u$ to $w$. 
\end{lemma}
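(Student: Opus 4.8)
The plan is to prove this by induction on the depth of the vertex $\mathsf{q}_u$ (equivalently, on $\ell(u)$, the length of the path $P_u$ from $\varphi(u)$ to the sink), and to handle the two cases $w \in X_{\mathsf{q}_u}$ and $w \in X_{\mathsf{p}_u}$ together by tracking how one walks backwards along the directed path $P_u$ in $D$ while simultaneously walking along edges of $N$ and $E^k \setminus N$ in $G^k$. The key structural fact is the correspondence between an arc $(z^u_i, z^u_{i+1}) \in A$ and a concrete vertex of $X$: by definition of $D$, such an arc witnesses a vertex $b_i \in X_{z^u_i} \cap B$ with $\varphi(b_i) = z^u_{i+1}$, and $b_i \in X_{z^u_i}$ means $\{b_i, z^u_i(t)\} \in N$ for some $t \in [k]$, while $\varphi(b_i) = z^u_{i+1}$ means $\{b_i, z^u_{i+1}\} \in M$ — but $b_i \notin X_{z^u_{i+1}}$ since $z^u_{i+1} \in W$ is such that no $B$-vertex of $X_{z^u_{i+1}}$ maps to it under $\varphi$ (wait — more carefully, I use that $b_i \ne u$ and $M$ is a matching to conclude $\{b_i, z^u_{i+1}(s)\} \notin N$ for the relevant copy, so the edge $\{b_i, z^u_{i+1}(s)\}$ lies in $E^k \setminus N$).

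First I would establish the base case. Suppose $\ell(u) = 2$, so $\mathsf{p}_u = \varphi(u)$ and $\mathsf{q}_u = z^u_2$ is a sink, with $(\varphi(u), z^u_2) \in A$ witnessed by some $b \in X_{\varphi(u)} \cap B$, $\varphi(b) = z^u_2$. For $w \in X_{\varphi(u)}$: since $\{b, \varphi(u)(t)\} \in N$ and $u \notin \partial(N) \cap U = X$ (because $u \in B \setminus X$), while $\{u, \varphi(u)(t')\} \in E^k$ for every $t' \in [k]$, pick the copy $\varphi(u)(t)$ used by $b$; if $w = b$ then $(u, \varphi(u)(t), b)$ is the desired path, and for a general $w \in X_{\varphi(u)}$, say $\{w, \varphi(u)(s)\} \in N$, the path is $(u, \varphi(u)(s), w)$, using that $u \notin \partial(N)$ makes $\{u, \varphi(u)(s)\} \in E^k \setminus N$. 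For $w \in X_{z^u_2}$: prepend — go $(u, \varphi(u)(t), b)$, then $b \in X_{z^u_2}$? No, $b \notin X_{z^u_2}$; rather $\{b, z^u_2(s)\} \in E^k \setminus N$ for the copy $z^u_2(s)$ where $w$ sits (if $w = b$ this is a shorter path). So the path is $(u, \varphi(u)(t), b, z^u_2(s), w)$.

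For the inductive step, I would assume the claim for all vertices of $B \setminus X$ whose associated path in $D$ is shorter, and for $u$ with $\ell(u) \ge 3$, let $(z^u_1, z^u_2) \in A$ be witnessed by $b \in X_{z^u_1} \cap B$, $\varphi(b) = z^u_2$. By Lemma~\ref{lemma:w}, $z^u_2 \in W$, and $b$ plays for $z^u_2$ the same role $u$ plays for $z^u_1 = \varphi(u)$ — except $b$ may lie in $X = \partial(N) \cap U$. The subtlety is that $b \in B$ and $b$ may or may not be in $X$; if $b \in B \setminus X$ then by the disjointness of the paths $P_u$ its associated path is exactly the suffix $(z^u_2, \dots, z^u_{\ell(u)})$, so $\mathsf{p}_b = \mathsf{p}_u$, $\mathsf{q}_b = \mathsf{q}_u$, and the induction hypothesis gives an alternating path from $b$ to $w$; I then splice $(u, \varphi(u)(t), b)$ in front, checking that $u, \varphi(u)(t)$ are new vertices (they are: $u \notin X \supseteq$ interior vertices, and $\varphi(u) = z^u_1$ is not on the suffix by distinctness of the $z^u_i$). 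If $b \in B \cap X$ — this is the main obstacle — I cannot invoke the induction hypothesis for $b$ directly since $b \notin B \setminus X$. Here the fix is to argue directly: build the alternating path from $u$ by alternately taking the $E^k \setminus N$ edge $\{z^u_i, \cdot\}$ and the $N$ edge out of the witness $b_i$ of the arc $(z^u_i, z^u_{i+1})$, i.e. construct $P_u$'s lift explicitly as $(u, z^u_1(t_1), b_1, z^u_2(t_2), b_2, \dots, b_{\ell(u)-1}, z^u_{\ell(u)}(s), w)$ where $\{b_i, z^u_i(t_i)\} \in N$, $\{b_i, z^u_{i+1}(t_{i+1})\} \in E^k \setminus N$, stopping one step early if $w$ coincides with some $b_i$. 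Distinctness of all vertices along this lift follows from: the $b_i$ are distinct (they are distinct elements of $B$, being associated to distinct arcs along a path), the $z^u_i$ are distinct (given), $u \notin X$, and the copies $z^u_i(t_i)$ are distinct because the $z^u_i$ are; and $u \notin \partial(N)$ gives the first edge the right parity. The alternation of the copies is fine because consecutive $N$-edges at $z^u_{i+1}$ — namely $\{b_i, z^u_{i+1}(t_{i+1})\}$ is NOT in $N$, while the $N$-edge at $z^u_{i+1}$ used next is $\{b_{i+1}, z^u_{i+1}(t'_{i+1})\}$ where $t'_{i+1}$ is whichever copy $b_{i+1}$ occupies — and for these to be combinable into an alternating path we need $\{b_i, z^u_{i+1}(t_{i+1})\}$ to use the same copy $z^u_{i+1}$ visits, which we can arrange since $b_i \in X_{z^u_{i+1}}$ is false (so $z^u_{i+1}(t)$ for the copy $b_{i+1}$ uses is not matched to $b_i$) — I will need to be careful that the "incoming non-matching edge" and "outgoing matching edge" at each $z^u_{i+1}$ use a consistent copy, and this is exactly where the bipartite blow-up $G^k$ (as opposed to $G$) is doing work. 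The main obstacle, then, is bookkeeping the copies $t_i \in [k]$ so that the lift of $P_u$ is genuinely an alternating path and all its vertices are distinct; I expect this to be the bulk of the argument, with everything else routine.
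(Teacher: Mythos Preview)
Your final construction is correct and coincides with the paper's, but you reach it via an unnecessary detour built on a small confusion. The witness $b$ of the arc $(z^u_1, z^u_2)$ lies in $X_{z^u_1} \cap B$, and $X_{z^u_1} \subseteq X$ by definition, so $b \in X \cap B$ always; your sub-case ``$b \in B \setminus X$'' is vacuous and the cross-$u$ induction on $\ell(u)$ never gets off the ground. The paper instead fixes $u$ and inducts on the index $i \in [\ell(u)]$, proving the stronger statement that for every $w \in X_{z^u_i}$ there is an alternating path from $u$ to $w$; the inductive step takes the path to the arc-witness $x_i \in X_{z^u_i} \cap B$ (available by the hypothesis, since $x_i \in X_{z^u_i}$) and appends the edge $\{x_i, z^u_{i+1}(s)\} \in E^k \setminus N$ followed by the $N$-edge $\{w, z^u_{i+1}(s)\}$. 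Your explicit lift $(u, z^u_1(t_1), b_1, z^u_2(t_2), b_2, \ldots)$ is exactly the unrolling of this induction. Your bookkeeping concerns are legitimate and are in fact treated more carefully than in the paper: distinctness of the $z^u_i$ follows from Lemma~\ref{lemma:in_degree} together with $z^u_1$ being a source; the $b_i$ are distinct because the sets $X_v$ are pairwise disjoint ($N$ is a matching); and $\{b_i, z^u_{i+1}(t_{i+1})\} \notin N$ because $b_i$'s unique $N$-edge goes to a copy of $z^u_i \neq z^u_{i+1}$, so $t_{i+1}$ may simply be chosen as the copy that $b_{i+1}$ (or $w$, at the last step) occupies.
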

\begin{proof}
For each integer $i \in [\ell(u)]$, we define 
$y_i := z_i^u$. 
We prove that, for every integer $i \in [\ell(u)]$ and 
every vertex $w \in X_{y_i}$,
there there exists an alternating path in $G^k$ with respect to $N$ 
from $u$ to $w$ by induction on $i$. 
Notice that since $u \notin X$, $u \notin \partial(N)$. 

Since $u \notin X$, 
$\{u,y_1(t)\} \notin N$ holds 
for every integer $t \in [k]$. 
Thus, for every vertex $w \in X_1$, since 
there exists an integer $t \in [k]$ such that 
$\{w,y_1(t)\} \in N$, 
there there exists an alternating path in $G^k$ with respect to $N$ 
from $u$ to $w$. 

Assume that the statement holds 
for some integer $i \in [\ell(u)-1]$. 
Since $y_i$ is not a sink of $D$, 
there exists a vertex $x_i \in X_{y_i} \cap B$
such that $\varphi(x_i) = y_{i+1}$. 
Then 
$\{x_i,y_{i+1}(t)\} \in E^k$ 
for every integer $t \in [k]$
(see Figure~\ref{fig:alternating}). 
The induction hypothesis implies that there 
exists an alternating path in $G^k$ with respect to $N$ 
from $u$ to $x_i$.
This implies that, 
for every vertex $w \in X_{y_{i+1}}$, 
there 
exists an alternating path in $G^k$ with respect to $N$ 
from $u$ to $w$.
This completes the proof. 
\end{proof} 

\begin{figure}[h]
\begin{center}
\includegraphics[width=12cm]{./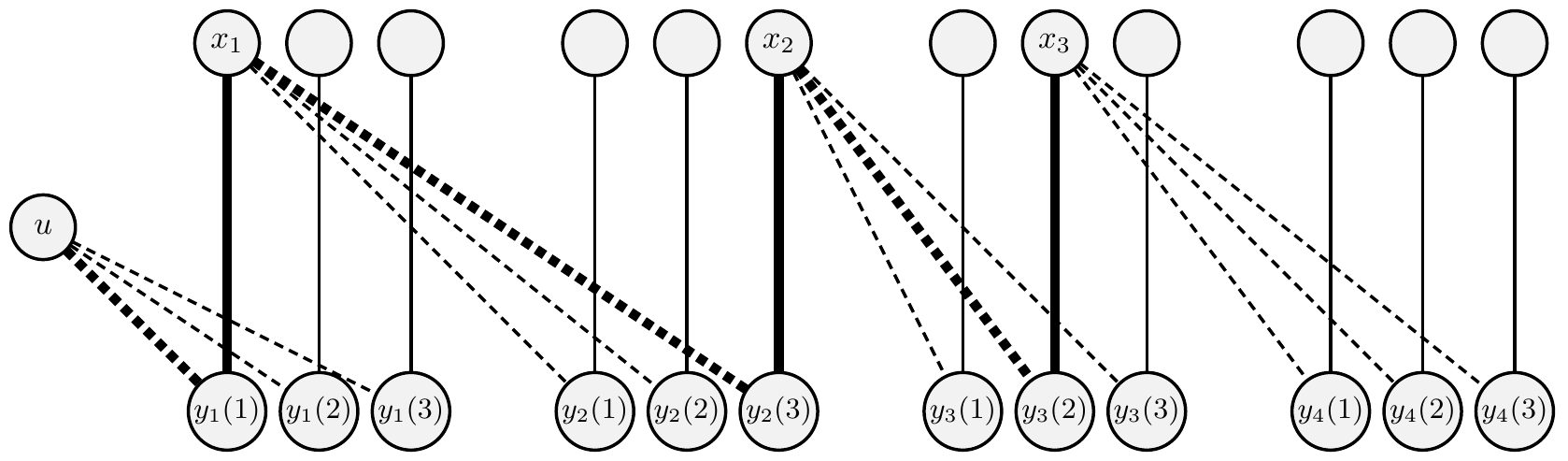}
\end{center}
\caption{An illustration of an alternating path.
The solid lines are edges in $N$, and 
the broken lines are edges in $E^k \setminus N$.}
\label{fig:alternating}
\end{figure}

\begin{lemma} \label{lemma:R1} 
For every vertex $u \in B \setminus X$, 
we have 
$\{{\sf q}_u(t) \mid t \in [k]\} \subseteq \partial(N)$. 
\end{lemma}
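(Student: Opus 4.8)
The plan is to argue by contradiction. Suppose that for some vertex $u \in B \setminus X$ and some integer $t \in [k]$ the vertex ${\sf q}_u(t)$ does not belong to $\partial(N)$; I would then construct an augmenting path in $G^k$ with respect to $N$, which is impossible because $\partial(N) \cap U = X$ is a base of ${\bf M}_G^k$ (as recorded at the end of Section~\ref{section:auxiliary}). This contradiction would give the lemma.

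The first step would be to produce a vertex $w \in B$ with $\varphi(w) = {\sf q}_u$ together with an alternating path in $G^k$ with respect to $N$ from $u$ to $w$. If $\ell(u) \ge 2$, then ${\sf p}_u = z^u_{\ell(u)-1}$ and ${\sf q}_u = z^u_{\ell(u)}$ satisfy $({\sf p}_u, {\sf q}_u) \in A$, so by the definition of $D$ there is a vertex $w \in X_{{\sf p}_u} \cap B$ with $\varphi(w) = {\sf q}_u$, and Lemma~\ref{lemma:alternating} (applied to $w \in X_{{\sf p}_u}$) supplies an alternating path from $u$ to $w$. If $\ell(u) = 1$, then ${\sf q}_u = \varphi(u)$, and I would take $w := u$ together with the one-vertex sequence $(u)$, which qualifies as an alternating path in $G^k$ with respect to $N$ since $u \in U$ and $u \notin \partial(N)$ (the latter because $u \in B \setminus X$ and $X = \partial(N) \cap U$).

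The second step would be to append the edge $\{w, {\sf q}_u(t)\}$ to this alternating path and check that the resulting sequence is an augmenting path from $u$ to ${\sf q}_u(t)$: we have $\{w, {\sf q}_u\} = \{w, \varphi(w)\} \in M \subseteq E$, hence $\{w, {\sf q}_u(t)\} \in E^k$; moreover $\{w, {\sf q}_u(t)\} \notin N$ and ${\sf q}_u(t) \notin \partial(N)$ by the standing assumption on ${\sf q}_u(t)$; and ${\sf q}_u(t)$ is distinct from every vertex already on the alternating path, since each vertex of that path lying in $V^k$ belongs to $\partial(N)$ whereas ${\sf q}_u(t)$ does not, and the vertices of the path lying in $U$ are not in $V^k$. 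Together these facts would show that the extended sequence is an augmenting path in $G^k$ with respect to $N$, the desired contradiction.

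I expect the only delicate points to be the split into the cases $\ell(u) \ge 2$ and $\ell(u) = 1$ (needed because ${\sf p}_u$ is defined only in the former), and the routine verification that the vertices of the constructed augmenting path are pairwise distinct. Everything else should follow immediately from Lemma~\ref{lemma:alternating}, the construction of $D$ and of the paths $P_u$, and the fact that no augmenting path exists with respect to a matching whose $U$-side is a base of ${\bf M}_G^k$.
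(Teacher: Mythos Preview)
Your argument is correct and matches the paper's: take $w$ with $\varphi(w)={\sf q}_u$, use Lemma~\ref{lemma:alternating} to get an alternating path from $u$ to $w$, and extend it by the edge $\{w,{\sf q}_u(t)\}$ to obtain an augmenting path, contradicting that $X=\partial(N)\cap U$ is a base of ${\bf M}_G^k$. You are even a bit more careful than the paper, which tacitly assumes $\ell(u)\ge 2$ when it writes ``let $w$ be a vertex in $X_{{\sf p}_u}\cap B$''; your case split covering $\ell(u)=1$ (with $w=u$ and the length-zero alternating path $(u)$) cleanly fills that small gap.
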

\begin{proof}
Let $w$ be a vertex in $X_{{\sf p}_u} \cap B$ such that 
$\varphi(w) = {\sf q}_u$. 
Lemma~\ref{lemma:alternating} implies that 
there exists an alternating path in $G^k$ with respect to $N$ 
from $u$ to $w$. 
Assume that there exists an integer $t \in [k]$ such that 
${\sf q}_u(t) \notin \partial(N)$. 
Since $\{w,{\sf q}_u(t)\} \in E^k \setminus N$, there exists an augmenting 
path in $G^k$ with respect to $N$. 
This contradicts the fact that $X$ is a base of ${\bf M}_G^k$. 
\end{proof} 
Lemma~\ref{lemma:R1} implies that 
$X_1,X_2,\dots,X_{|B\setminus X|}$ and $\phi$ 
satisfy (R1). 

\begin{lemma} \label{lemma:R2} 
For every vertex $u \in B \setminus X$
and every vertex $w \in X_{{\sf q}_u}$, 
we have $\omega(u) \le \omega(w)$. 
\end{lemma}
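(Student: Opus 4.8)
The plan is to exploit the alternating path from Lemma~\ref{lemma:alternating} together with the maximum-weight property of $N$. Fix $u \in B \setminus X$ and $w \in X_{{\sf q}_u}$. By Lemma~\ref{lemma:alternating} there is an alternating path $P$ in $G^k$ with respect to $N$ from $u$ to $w$; write $P = (x_1, y_1, \dots, y_{\ell}, x_{\ell+1})$ with $x_1 = u$ and $x_{\ell+1} = w$. Recall from Section~\ref{section:auxiliary} that $\overline{\omega}(P) = \omega(x_1) - \omega(x_{\ell+1}) = \omega(u) - \omega(w)$, and that $\overline{\omega}(N \oplus P) = \overline{\omega}(N) + \overline{\omega}(P)$. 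Since $N$ is a maximum-weight matching in $G^k$ with respect to $\overline{\omega}$ and $N \oplus P$ is again a matching in $G^k$, we must have $\overline{\omega}(P) \le 0$, i.e. $\omega(u) \le \omega(w)$.

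The one point that needs care is checking that $N \oplus P$ is indeed a matching in $G^k$, so that the inequality $\overline{\omega}(N \oplus P) \le \overline{\omega}(N)$ is legitimate. This follows from the definition of an alternating path: the vertices $x_1, y_1, x_2, \dots, y_\ell, x_{\ell+1}$ are pairwise distinct, $x_1 \notin \partial(N)$, and the edges alternate between $E^k \setminus N$ and $N$, so the symmetric-difference operation $N \oplus P$ removes the matched edges $\{x_{i+1}, y_i\}$ and adds the unmatched edges $\{x_i, y_i\}$ without creating any shared endpoint. This is exactly the standard augmentation/alternation fact, and the excerpt already records the identity $\overline{\omega}(N \oplus P) = \overline{\omega}(N) + \overline{\omega}(P)$, so I would simply cite that identity. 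Alternatively, and perhaps more cleanly, I would observe directly that the paragraph in Section~\ref{section:auxiliary} already states: if $N$ is a maximum-weight matching, then there is no alternating path $P$ with $\overline{\omega}(P) > 0$; applying this to $P$ yields $\overline{\omega}(P) \le 0$, hence $\omega(u) \le \omega(w)$.

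I do not anticipate a serious obstacle here; the lemma is essentially an immediate consequence of Lemma~\ref{lemma:alternating} and the no-positive-alternating-path property of maximum-weight matchings. The only mild subtlety is making sure the direction of the weight difference is correct — the alternating path runs \emph{from} $u$ (the uncovered vertex in $U$, outside $X$) \emph{to} $w$ (a vertex of $X$), so $\overline{\omega}(P) = \omega(u) - \omega(w)$ and non-positivity gives $\omega(u) \le \omega(w)$, which is precisely (R2). So the write-up is short: invoke Lemma~\ref{lemma:alternating} to get the alternating path from $u$ to $w$, recall $\overline{\omega}(P) = \omega(u) - \omega(w)$, and conclude from maximum-weightness of $N$ that this quantity is at most $0$.

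After this lemma, it remains only to verify (R3), which will use Lemma~\ref{lemma:alternating} again (now simultaneously along the disjoint paths $P_u$) together with Lemma~\ref{lemma:independent} to exhibit the required matching in $G^k$ witnessing that $(B \cap X) \cup \{u_1, \dots, u_{|B \setminus X|}\} \in \mathcal{I}_G^k$, and hence, by reading off one coordinate per vertex of $V$, in $\mathcal{I}_G$.
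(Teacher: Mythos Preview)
Your proof of Lemma~\ref{lemma:R2} is correct and follows exactly the paper's approach: invoke Lemma~\ref{lemma:alternating} to obtain an alternating path $P$ from $u$ to $w$, then use the observation from Section~\ref{section:auxiliary} that a maximum-weight matching admits no alternating path with $\overline{\omega}(P)>0$ to conclude $\omega(u)\le\omega(w)$. The paper states it as a one-line contradiction, but the content is identical.
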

\begin{proof}
For every vertex in $u \in B \setminus X$, 
if there exists a vertex $w \in X_{{\sf q}_u}$
such that $\omega(u) > \omega(w)$, 
then it follows from Lemma~\ref{lemma:alternating} that there exists 
an alternating path $P$ in $G^k$ with respect to $N$ 
such that $\overline{\omega}(P) > 0$.
However, this contradicts the fact that 
$N$ is a maximum-weight matching in $G^k$ with respect to 
$\overline{\omega}$. 
\end{proof} 
Lemma~\ref{lemma:R2} implies that 
$X_1,X_2,\dots,X_{|B\setminus X|}$ and $\phi$ 
satisfy (R2). 

\begin{lemma} \label{lemma:R3} 
For every element $(u_1,u_2,\dots,u_{|B\setminus X|}) 
\in X_1 \times X_2 \times \dots \times X_{|B \setminus X|}$, 
we have 
\begin{equation*}
(B \cap X) \cup \{u_1,u_2,\dots,u_{|B \setminus X|}\} \in 
\mathcal{I}_G. 
\end{equation*}
\end{lemma}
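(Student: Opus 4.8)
The plan is to prove the statement constructively: for an arbitrary choice $(u_1,u_2,\dots,u_{|B\setminus X|}) \in X_1 \times X_2 \times \dots \times X_{|B\setminus X|}$, I would exhibit a matching $M^{\ast}$ in $G$ such that $\partial(M^{\ast}) \cap U = (B \cap X) \cup \{u_1,u_2,\dots,u_{|B\setminus X|}\}$. By the definition of $\mathcal{I}_G$ this membership in $\mathcal{I}_G$ then follows immediately. The matching $M^{\ast}$ will be obtained from $M$ by rerouting edges along the paths $P_u$: intuitively, along $P_u$ we drop the edge of $M$ covering $u$, slide each subsequent edge back by one vertex of $P_u$, and use the freed-up vertex ${\sf q}_u$ to cover the chosen vertex $u_{\phi(u)}\in X_{{\sf q}_u}$.

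To set this up I would first name the edges hidden in $D$ and $N$. For each $u \in B\setminus X$ and each $i \in [\ell(u)-1]$, the arc $(z^u_i,z^u_{i+1})\in A$ supplies a vertex $b^u_i \in X_{z^u_i}\cap B$ with $\varphi(b^u_i)=z^u_{i+1}$; it is convenient to also set $b^u_0:=u$, so that $\{b^u_i,z^u_{i+1}\}\in M$ and $z^u_{i+1}=\varphi(b^u_i)$ for all $0\le i\le \ell(u)-1$. Two further observations: since $b^u_i\in X_{z^u_i}$ there is $t\in[k]$ with $\{b^u_i,z^u_i(t)\}\in N$, hence $\{b^u_i,z^u_i\}\in E$ for every $i\in[\ell(u)-1]$; and since $\sigma(\phi(u))={\sf q}_u$ we have $u_{\phi(u)}\in X_{{\sf q}_u}$, hence $\{u_{\phi(u)},{\sf q}_u\}\in E$.

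Next I would define a map $\psi$ on $(B\cap X)\cup\{u_1,\dots,u_{|B\setminus X|}\}$ by: $\psi(b^u_i):=z^u_i$ for each $u\in B\setminus X$ and $i\in[\ell(u)-1]$; $\psi(u_{\phi(u)}):={\sf q}_u$ for each $u\in B\setminus X$; and $\psi(b):=\varphi(b)$ for every remaining $b\in B\cap X$ (that is, every $b\in B\cap X$ not of the form $b^u_i$). This is well defined because the $b^u_i$ with $i\ge1$ are pairwise distinct elements of $B\cap X$ (equal values would force $\varphi(b^u_i)=\varphi(b^{u'}_j)$, contradicting injectivity of $\varphi$ on $B$ together with vertex-disjointness of the $P_u$), the $u_{\phi(u)}$ are pairwise distinct (the $X_{{\sf q}_u}$ are pairwise disjoint since $N$ is a matching and the ${\sf q}_u$ are distinct), and the two groups are disjoint since $u_{\phi(u)}\in X\setminus B$ while $b^u_i\in B$. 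By the previous paragraph every pair $\{x,\psi(x)\}$ is an edge of $G$, so once $\psi$ is shown to be injective, $M^{\ast}:=\{\{x,\psi(x)\} : x\in (B\cap X)\cup\{u_1,\dots,u_{|B\setminus X|}\}\}$ is a matching in $G$ with $\partial(M^{\ast})\cap U$ equal to exactly the desired set, finishing the proof.

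The heart of the argument, and the step I expect to be the main obstacle, is the injectivity of $\psi$, which I would verify by a short case analysis on two vertices $x\neq x'$ with $\psi(x)=\psi(x')$. The available tools are: $\varphi$ is injective on $B$ because $M$ is a matching; each $P_u$ is a path in $D$, so its vertices $z^u_1,\dots,z^u_{\ell(u)}$ are distinct; the paths $P_u$ are pairwise vertex-disjoint; the sets $X_v$ for $v\in V$ are pairwise disjoint because $N$ is a matching; and the endpoints ${\sf q}_u$ are distinct. For example, if $\psi(x)=\varphi(b)$ with $b\in B\cap X$ not of the form $b^u_i$, while $\psi(x')=z^u_i=\varphi(b^u_{i-1})$ for some $u\in B\setminus X$ and $i\in[\ell(u)]$, then injectivity of $\varphi$ on $B$ gives $b=b^u_{i-1}$ (which is $u\in B\setminus X$ when $i=1$), contradicting the choice of $b$; clashes of the form $z^u_i=z^{u'}_j$ or ${\sf q}_u={\sf q}_{u'}$ are ruled out by path-distinctness and vertex-disjointness; and a clash $\psi(u_{\phi(u)})={\sf q}_u=z^u_{\ell(u)}$ against $\psi(b^{u'}_j)=z^{u'}_j$ with $j<\ell(u')$ is impossible because, if $u=u'$, $z^u_j$ with $j<\ell(u)$ cannot equal $z^u_{\ell(u)}$, and if $u\neq u'$ the two vertices lie on disjoint paths. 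After all cases are dispatched, $\psi$ is injective, $M^{\ast}$ is the required matching, and the lemma — hence Theorem~\ref{main_theorem} — follows.
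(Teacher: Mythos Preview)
Your proof is correct and takes essentially the same approach as the paper: both reroute $M$ along the paths $P_u$, sending each intermediate vertex $b^u_i$ (the paper's $x^u_i$) to $z^u_i$ and the chosen $u_{\phi(u)}$ to ${\sf q}_u$, to obtain a matching in $G$ witnessing the desired independence. The only cosmetic difference is that the paper phrases the construction via the alternating paths in $G^k$ supplied by Lemma~\ref{lemma:alternating} and a symmetric-difference formula for $M'$, whereas you work directly in $G$ and spell out the injectivity of $\psi$ by case analysis; the two resulting matchings coincide.
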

\begin{proof}
For each vertex $u \in B \setminus X$, let $w_u$ be an 
arbitrary vertex in $X_{\phi(u)}$ ($=X_{{\sf q}_u}$). 
For every vertex in $u \in B \setminus X$,
Lemma~\ref{lemma:alternating} implies that 
there exists an alternating path 
\begin{equation*}
(x^u_{0},z^u_1(t^u_1),x^u_{1},z^u_2(t^u_2),
\dots, x^u_{\ell(u)-1}, z^u_{\ell(u)}(t^u_{\ell(u)}),x^u_{\ell(u)}) 
\end{equation*}
in $G^k$ with respect to $N$ from $u$ to $w_u$.
Notice that
$x^u_{0} = u$, $x^u_{\ell(u)} = w_u$, and 
$\{x^u_{i-1},z^u_{i}\} \in M$ 
for every integer $i \in [\ell(u)]$. 
Define $M^{\prime}$ by 
\begin{equation*}
M^{\prime} := 
\big(M \setminus 
\bigcup_{u \in B \setminus X}\big\{
\{x^u_{i-1},z^u_{i}\} \mid i \in [\ell(u)]
\big\}
\big)
\cup
\bigcup_{u \in B \setminus X}\big\{
\{x^u_i,z^u_i\} \mid i \in [\ell(u)]
\big\}.
\end{equation*}
Then $M^{\prime}$ is a matching in $G$, and 
\begin{equation*}
\partial(M^{\prime}) \cap U = 
(B \cap X) \cup \{w_u \mid u \in B \setminus X\}. 
\end{equation*} 
This completes the proof. 
\end{proof} 

Lemma~\ref{lemma:R3} implies that 
$X_1,X_2,\dots,X_{|B\setminus X|}$ and $\phi$ 
satisfy (R3).
This completes the proof. 

\section{Remarks on the Definition of $\tau$}
\label{section:definition} 

Throughout this section, 
let ${\bf M} = (U,\mathcal{I})$ be a matroid, and 
we are given 
a function 
$\omega \colon U \to \mathbb{Z}_+$
and a positive integer $k$. 
Define $n := |U|$.

Let $\mathcal{F}$ be the set of bijections 
$\xi \colon [n] \to U$ such that, for 
every pair of integers $i,j \in [n]$ such that 
$i \le j$, we have $\omega(\xi(i)) \ge \omega(\xi(j))$. 
Then for each positive integer $\ell$ and 
each element $\xi \in \mathcal{F}$, we define 
${\sf Greedy}_{\ell}(\xi)$ as the output of Algorithm~\ref{alg:greedy}. 
\begin{algorithm}[h]
Define $I_0 := \emptyset$.\\
\For{$i = 1,2,\dots,n$}
{
  If $I_{i-1} + \xi(i) \in \mathcal{I}^{\ell}$, then we 
  $I_i := I_{i-1} + \xi(i)$. 
  Otherwise, we define $I_i := I_{i-1}$.
}
Output $I_n$, and halt. 
\caption{Algorithm for defining ${\sf Greedy}_{\ell}(\xi)$}
\label{alg:greedy}
\end{algorithm}

Huang and Sellier~\cite{HS22}
originally defined $\tau({\bf M},\omega,k)$ as 
the minimum positive integer 
$\ell$ such that, for every element $\xi \in \mathcal{F}$, 
${\sf Greedy}_{\ell}(\xi)$ is a $k$-robust subset of ${\bf M}$
with respect to $\omega$. 
In what follows, we prove that our definition is equivalent 
to the original definition in \cite{HS22}. 

In what follows, let $\ell$ be a positive integer. 

It is known that, 
for every element $\xi \in \mathcal{F}$, 
${\sf Greedy}_{\ell}(\xi)$ is an optimal base of ${\bf M}^{\ell}$ with respect to 
$\omega$~\cite{R57}.
Thus, what remains is to prove that, for every optimal base $Z$ 
of ${\bf M}^{\ell}$ with respect to $\omega$, there exists an element 
$\xi \in \mathcal{F}$ such that 
${\sf Greedy}_{\ell}(\xi) = Z$. 

Let $Z$ be an optimal base 
of ${\bf M}^{\ell}$ with respect to $\omega$. 
Define $s := |\{\omega(u) \mid u \in U\}|$, i.e., 
$s$ is the number of distinct values in $\{\omega(u) \mid u \in U\}$. 
Furthermore, we define $\lambda_1,\lambda_2,\dots,\lambda_s$ as 
the distinct non-negative integers such that 
$\lambda_1 > \lambda_2 > \dots > \lambda_s$
and 
$\{\lambda_1,\lambda_2,\dots,\lambda_s\} = 
\{\omega(u) \mid u \in U\}$. 
For each integer $i \in [s]$, we define 
$U_i$ as the set of elements $u \in U$ such that 
$\omega(u) \ge \lambda_i$. 
Furthermore, for each integer $i \in [s]$, we define 
$n_i := |U_i|$.
Define $\xi_Z$ as an element in $\mathcal{F}$ such that 
\begin{equation} \label{eq1:xi}
\{\xi_Z(n_{i-1}+1), \xi_Z(n_{i-1}+2), \dots, 
\xi_Z(n_{i-1} + |Z \cap (U_{i} \setminus U_{i-1})|)\} = 
Z \cap (U_{i} \setminus U_{i-1})
\end{equation}
for every integer $i \in [s]$, where we define $n_0 := 0$
and $U_0 := \emptyset$.
In what follows, we prove that 
${\sf Greedy}_{\ell}(\xi_Z) = Z$. 
This completes the proof of the equivalence. 

For each integer $i \in \{0\} \cup [s]$, 
we define $Z_i = Z \cap U_i$. 
Then we prove that, for every integer $i \in \{0\} \cup [s]$, 
${\sf Greedy}_{\ell}(\xi_Z) \cap U_i = Z_i$ 
by induction on $i$. 
If $i = 0$, the statement clearly holds. 
Let $i$ be an integer in $[s]$.
Assume that 
${\sf Greedy}_{\ell}(\xi_Z) \cap U_{i-1} = Z_{i-1}$.  
Since $Z \in \mathcal{I}^{\ell}$, 
(I1) implies 
that $Z_i \in \mathcal{I}^{\ell}$. 
Thus, since ${\sf Greedy}_{\ell}(\xi_Z) \cap U_{i-1} = Z_{i-1}$,
\eqref{eq1:xi} implies that 
$Z_i \subseteq {\sf Greedy}_{\ell}(\xi_Z) \cap U_i$.
What remains is to prove that
$Z_i + u \notin \mathcal{I}^{\ell}$ holds
for every element 
$u \in U_i \setminus (U_{i-1} \cup Z_i)$.
In order to prove this by contradiction, 
we assume that there exists an element $u \in U_i \setminus (U_{i-1} \cup Z_i)$
such that $Z_i + u \in \mathcal{I}^{\ell}$.  
Since $Z$ is a base of ${\bf M}^{\ell}$, 
$Z + u \notin \mathcal{I}^{\ell}$. 
In this case, it is well known that 
there exists a subset $C \subseteq Z + u$ such that
$C \notin \mathcal{I}^{\ell}$ and  
$Z + u - w \in \mathcal{I}^{\ell}$
for every element $w \in C$
(see, e.g., \cite[Corollary 1.2.6]{O11} and 
\cite[Excircise~5, p.20]{O11}). 
If $\omega(w) \ge \omega(u)$ for every element $w \in C$, 
then $C \subseteq Z_i + u$.
However, this contradicts the fact that $Z_i + u \in \mathcal{I}^{\ell}$. 
Thus, there exists an element $v \in C$ such that 
$\omega(u) > \omega(v)$. 
Since $Z + u - v$ is a base of ${\bf M}^{\ell}$,
this contradicts the fact that $Z$ is an optimal base of 
${\bf M}^{\ell}$ with respect to $\omega$. 
Thus, 
$Z_i + u \notin \mathcal{I}^{\ell}$
holds 
for every element 
$u \in U_i \setminus (U_{i-1} \cup Z_i)$. 
This completes the proof. 

\bibliographystyle{plain}
\bibliography{robust_matroid}

\end{document}